\newlength{\defbaselineskip}
\newcommand{\setlinespacing}[1]%
           {\setlength{\baselineskip}{#1 \defbaselineskip}}
\numberwithin{equation}{section}
\newtheorem{thm}{Theorem}[section]
\newtheorem{lem}[thm]{Lemma}
\theoremstyle{definition}
\theoremstyle{remark}
\numberwithin{equation}{section}
\begin{document}

\title[Local behavior of eigenfunctions]
{A note on local behavior of eigenfunctions of the Schr\"odinger operator}

\author{Ihyeok Seo}

\thanks{2010 \textit{Mathematics Subject Classification.} Primary: 34L10; Secondary: 35J10.}
\thanks{\textit{Key words and phrases.} Eigenfunctions, Schr\"odinger operator.}

\address{Department of Mathematics, Sungkyunkwan University, Suwon 440-746, Republic of Korea}
\email{ihseo@skku.edu}

\maketitle

\begin{abstract}
We show that a real eigenfunction of the Schr\"odinger operator changes sign near some point in $\mathbb{R}^n$
under a suitable assumption on the potential.
\end{abstract}

%-------------------------------------------------------------------------------------------------------------------
\section{Introduction}

The time evolution of a non-relativistic quantum particle
is described by the wave function $\Psi(t,x)$
which is governed by the Schr\"odinger equation
$$i\partial_t\Psi(t,x)=H\Psi(t,x),$$
where the Hamiltonian $H=-\Delta+V(x)$ is called the Schr\"odinger operator.
Here, $\Delta$ is the Laplace operator and $V$ is a potential.

The fundamental approach to find a solution of the above equation is by separation of variables.
In fact, considering the ansatz $\Psi(t,x)=f(t)\psi(x)$,
the solution can be written as $\Psi(t,x)=f(0)e^{-iEt}\psi(x)=e^{-iEt}\Psi(0,x)$, where $E$ is an eigenvalue
with the corresponding eigenfunction $\psi$ which is a solution of the following eigenvalue equation for the Schr\"odinger operator:
\begin{equation}\label{eig}
(-\Delta+V(x))\psi(x)=E\psi(x).
\end{equation}
From the physical point of view, $E\in\mathbb{R}$ is the energy level of the particle.

In this note we are interested in local behavior of $\psi$ near some point in $\mathbb{R}^n$.
By using Brownian motion ideas, it was shown in \cite{H-OH-O2S} that for a certain class of potentials $V$,
if $\psi(x_0)=0$ for $x_0\in\mathbb{R}^n$ and $\psi$ is real, then either

\smallskip

\textit{$(a)$ $\psi$ is identically zero near $x_0$ or}

\textit{$(b)$ $\psi$ has both positive and negative signs arbitrarily close to $x_0$.}

\smallskip

\noindent As remarked in \cite{H-OH-O2S}, this asserts that the nodal set $\{x:\psi(x)=0\}$ must have (Hausdorff) dimension at least $n-1$.
Also, in many cases, the first cannot occur if $\psi\not\equiv0$,
and so one can assert that the eigenfunction $\psi$ changes sign near $x_0$ in that case.

Here we will consider potentials $V$ given by
\begin{equation}\label{ks}
\|V\|:=\sup_{Q}\bigg(\int_Q|V(x)|dx\bigg)^{-1}\int_Q\int_Q\frac{|V(x)V(y)|}{|x-y|^{n-2}}dxdy<\infty,
\end{equation}
where the sup is taken over all dyadic cubes $Q$ in $\mathbb{R}^n$, $n\geq3$.
Our goal is to prove the following theorem.

\begin{thm}\label{thm}
Let $n\geq3$. Assume that $\psi\in H^1(\mathbb{R}^n)$ is real and is an eigenfunction of \eqref{eig} with $E\in\mathbb{C}$.
If the potential $V$ satisfies \eqref{ks} and $\psi$ has a zero of infinite order at $x_0\in\mathbb{R}^n$,
then either $(a)$ holds or $(b)$ holds.
\end{thm}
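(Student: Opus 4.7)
I would argue by contradiction, reducing the conclusion to a strong maximum principle for the Schr\"odinger operator $L := -\Delta + V - E$ under the assumption \eqref{ks}. Suppose that neither (a) nor (b) holds. The failure of (b) gives a ball $B = B(x_0, r)$ on which $\psi$ does not change sign, and after replacing $\psi$ by $-\psi$ if necessary we may assume $\psi \ge 0$ on $B$. The failure of (a) means $\psi$ is not identically zero in any neighborhood of $x_0$. Since $\psi$ vanishes to infinite order at $x_0$, in particular $\psi(x_0) = 0$. Thus $\psi$ is a nontrivial, nonnegative $H^1$ weak solution of $L\psi = 0$ on $B$ attaining the value zero at the interior point $x_0$.

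To complete the contradiction I would prove a local Harnack inequality for nonnegative $H^1$ solutions of $Lu = 0$ on $B$: for every concentric $B' \Subset B$,
$$\sup_{B'} u \le C \inf_{B'} u,$$
with $C$ depending on $n$, $\|V\|$, $|E|$, and $B' \Subset B$. Applied to $\psi$ on a small concentric ball $B' \ni x_0$, this forces $\sup_{B'} \psi \le C\,\psi(x_0) = 0$, hence $\psi \equiv 0$ on $B'$, contradicting the failure of (a). In particular this shows the hypothesis \eqref{ks} is only used through the Harnack step.

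The main obstacle is proving this Harnack inequality under \eqref{ks}. My approach would be Moser iteration, whose crucial input is a Fefferman--Phong-type form bound
$$\int_Q |V(x)|\, u(x)^2 \, dx \;\le\; \varepsilon \int_Q |\nabla u|^2 \, dx + C_\varepsilon \int_Q u^2 \, dx \qquad (u \in H^1_0(Q)),$$
valid on every dyadic cube $Q$ with constants depending only on $\|V\|$ and $\varepsilon$. Writing the double integral in \eqref{ks} as $c_n \int_Q (I_2 |V|)(x)\, |V(x)|\, dx$, where $I_2$ is the Riesz potential of order two, one sees that \eqref{ks} is a Kerman--Sawyer-type trace inequality for $I_2$ against the measure $d\mu = |V|\, dx$. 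Converting this trace bound into the quadratic form bound above, via duality combined with the Sobolev embedding $H^1_0(Q) \hookrightarrow L^{2n/(n-2)}(Q)$, is the technical heart of the argument. Once the form bound is in hand, standard Moser iteration simultaneously yields an $L^\infty$ estimate for subsolutions and a weak Harnack estimate for supersolutions of $L$, whose combination is the Harnack inequality and completes the proof.
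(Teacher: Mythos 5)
There is a genuine gap: the Harnack inequality you propose as the key lemma is false under the hypothesis \eqref{ks}, and your argument never really uses the infinite-order vanishing of $\psi$ --- which is the warning sign. Your reduction only uses $\psi(x_0)=0$, so if it worked the theorem would hold for any nonnegative solution with a single zero. But take $V(x)=\beta(\beta+n-2)/|x|^2$ with $\beta>0$, which satisfies \eqref{ks} (the paper notes $1/|x|^2$ does, and \eqref{ks} is homogeneous in $V$). Then $u(x)=|x|^\beta$ is a nonnegative $H^1_{\mathrm{loc}}$ weak solution of $-\Delta u+Vu=0$ near the origin, vanishing at $0$ but not identically; so $\sup_{B'}u\leq C\inf_{B'}u$ fails on any ball $B'\ni 0$. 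The same example explains why your Fefferman--Phong form bound with arbitrarily small $\varepsilon$ cannot hold: by the sharpness of Hardy's inequality and scale invariance (both $\int |V|u^2$ and $\int|\nabla u|^2$ scale identically for $V=c/|x|^2$, while $\int u^2$ scales away), the best one can get from \eqref{ks} is $\int |V|u^2\leq C\|V\|\int|\nabla u|^2$ with a constant that cannot be made small. Without the small-$\varepsilon$ absorption, Moser iteration does not deliver the local boundedness of subsolutions, hence no full Harnack inequality.

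The paper's proof sidesteps exactly this obstruction. Testing the equation with $\eta^2/(\psi+\varepsilon)$ puts the potential against the \emph{known} cutoff, $\int|V|\eta^2\,dx\leq C\|V\|\int|\nabla\eta|^2\,dx$ (via the Kerman--Sawyer Lemma \ref{lem}), where no smallness of the constant is needed; this yields only the logarithmic Caccioppoli estimate $\int_{B(0,\delta)}|\nabla\ln(\psi+\varepsilon)|^2\,dx\leq C\delta^{n-2}$. Poincar\'e and John--Nirenberg then give an $A_2$-type bound for $\psi^\rho$, hence the doubling property \eqref{doub} --- a strictly weaker conclusion than Harnack, consistent with the counterexample above. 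It is only at this point that the infinite-order vanishing enters: doubling plus a zero of infinite order forces $\psi^\rho\equiv 0$ near $x_0$ (Lemma \ref{lem3}). If you want to salvage your outline, you should aim for the weak Harnack/doubling statement rather than the full Harnack inequality, and you must make the infinite-order hypothesis do the final work.
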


Let us now give more details about the assumptions in the theorem.

First, $H^1(\mathbb{R}^n)$ denotes the Sobolev space of functions
whose derivatives up to order $1$ belong to $L^2(\mathbb{R}^n)$,
and by an eigenfunction of \eqref{eig} we mean a weak solution such that
for every $\phi\in H_0^{1}(\mathbb{R}^n)$
\begin{equation}\label{weak}
\int_{\mathbb{R}^n}\nabla\psi\cdot\nabla\phi+(V(x)-E)\psi\phi\,dx=0.
\end{equation}

Next, we say that $\psi$ has a zero of infinite order at $x_0\in\mathbb{R}^n$ if for all $m>0$
\begin{equation}\label{order}
\int_{B(x_0,\varepsilon)}\psi(x)dx=O(\varepsilon^m)\quad\text{as}\quad\varepsilon\rightarrow0,
\end{equation}
where $B(x_0,\varepsilon)$ is the ball centered at $x_0$ with radius $\varepsilon$.
For smooth $\psi$, $\eqref{order}$ holds if and only if
$D^\alpha\psi(x_0)=0$ for every order $|\alpha|$.

Finally, the condition \eqref{ks} is closely related to the global Kato and Rollnik potential classes,
denoted by $\mathcal{K}$ and $\mathcal{R}$, respectively, which are defined by
\begin{equation*}
V\in\mathcal{K}\quad\Leftrightarrow\quad
\sup_{x\in\mathbb{R}^n}\int_{\mathbb{R}^n}\frac{|V(y)|}{|x-y|^{n-2}}dy<\infty
\end{equation*}
and
\begin{equation*}
V\in\mathcal{R}\quad\Leftrightarrow\quad
\int_{\mathbb{R}^3}\int_{\mathbb{R}^3}\frac{|V(x)V(y)|}{|x-y|^{2}}dxdy<\infty.
\end{equation*}
These are fundamental classes of potentials in spectral and scattering theory.
Indeed, it is not difficult to see that the Kato and Rollnik potentials satisfy the condition \eqref{ks}.
It should be also noted that there are potentials satisfying \eqref{ks} which are not in $\mathcal{K}$.
For example, $V(x)=1/|x|^2$. More generally, potentials in the Fefferman-Phong class $\mathcal{F}^p$,
which is defined by
$$V\in\mathcal{F}^p\quad\Leftrightarrow\quad
\sup_{x,r}r^{2-n/p}\bigg(\int_{B(x,r)}|V(y)|^pdy\bigg)^{1/p}<\infty$$
for $1<p\leq n/2$, satisfy \eqref{ks} (see, for example, \cite{S}).
In particular, $L^{n/2}=\mathcal{F}^{n/2}$ and even $1/|x|^2\in L^{n/2,\infty}\subset \mathcal{F}^p$ if $p\neq n/2$.
Hence the above theorem can be seen as a natural extension to potentials satisfying \eqref{ks}
of the result obtained in \cite{CG} for potentials $V\in L^{n/2,\infty}$.

\section{Preliminaries}

The key ingredient in the proof of the theorem is the following lemma,
due to Kerman and Sawyer \cite{KS} (see Theorem 2.3 there and also Lemma 2.1 in \cite{BBRV}),
which characterizes weighted $L^2$ inequalities for the fractional integral
$$I_\alpha f(x)=\int_{\mathbb{R}^n}\frac{f(y)}{|x-y|^{n-\alpha}}dy,\quad 0<\alpha<n.$$
In fact our motivation for the condition \eqref{ks} stemmed from the characterization.

\begin{lem}\label{lem}
Let $n\geq3$. Assume that $w$ be a nonnegative measurable function on $\mathbb{R}^n$.
Then there exists a constant $C_w$ depending on $w$ such that
the following inequality
\begin{equation}\label{ee}
\|I_{\alpha/2}f\|_{L^2(w)}\leq C_w\|f\|_{L^2}
\end{equation}
holds for all measurable functions $f$ on $\mathbb{R}^n$
if and only if
\begin{equation}\label{ks2}
\|w\|_\alpha:=\sup_{Q}\bigg(\int_Qw(x)dx\bigg)^{-1}\int_Q\int_Q\frac{w(x)w(y)}{|x-y|^{n-\alpha}}dxdy<\infty.
\end{equation}
Furthermore, the constant $C_w$ may be taken to be a constant multiple of $\|w\|_\alpha^{1/2}$.
\end{lem}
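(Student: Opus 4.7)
My plan is to dualize the inequality, convert it into a bilinear form estimate via the Riesz composition identity, and control that form by a dyadic discretization which turns the hypothesis \eqref{ks2} into a Carleson condition.

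\emph{Step 1 (duality and bilinear reformulation).} By duality, \eqref{ee} is equivalent to
$$\|I_{\alpha/2}(wg)\|_{L^2(\mathbb{R}^n)}\le C_w\|g\|_{L^2(w)}\qquad\text{for all }g.$$
Squaring this, using self-adjointness of $I_{\alpha/2}$ together with the semigroup identity $I_{\alpha/2}\circ I_{\alpha/2}=c_n I_\alpha$ (valid by Fubini for nonnegative $g$, since $0<\alpha<n$), yields the equivalent bilinear inequality
\begin{equation}\label{bilplan}
\iint\frac{w(x)g(x)w(y)g(y)}{|x-y|^{n-\alpha}}\,dx\,dy\le c_n^{-1}C_w^2\int|g|^2 w\,dx\qquad(g\ge 0).
\end{equation}
Necessity of \eqref{ks2}, together with the lower bound $C_w\gtrsim\|w\|_\alpha^{1/2}$, is immediate by testing \eqref{bilplan} against $g=\chi_Q$ for an arbitrary dyadic cube $Q$: the left side becomes the inner double integral defining $\|w\|_\alpha$ at $Q$, while the right side is $c_n^{-1}C_w^2\,w(Q)$.

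\emph{Step 2 (sufficiency).} Assuming $\|w\|_\alpha<\infty$, I would decompose the kernel dyadically: splitting $\mathbb{R}^n\times\mathbb{R}^n$ into annular regions $2^k\le|x-y|<2^{k+1}$ and covering each such annulus by boundedly many dyadic cubes of side length $2^k$ drawn from a finite family of shifted dyadic grids, one obtains
$$\iint\frac{w(x)g(x)w(y)g(y)}{|x-y|^{n-\alpha}}\,dx\,dy\lesssim\sum_Q\mu(Q)\bigl(\langle g\rangle_Q^w\bigr)^2,$$
where $\mu(Q):=\ell(Q)^{\alpha-n}w(Q)^2$, $\langle g\rangle_Q^w:=w(Q)^{-1}\int_Q gw\,dx$, and the sum runs over all dyadic cubes in the grids. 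Applying the very same dyadic decomposition to the inner double integral in \eqref{ks2} localized on a dyadic cube $R$ shows $\sum_{Q\subseteq R}\mu(Q)\lesssim\|w\|_\alpha\,w(R)$, so $\{\mu(Q)\}$ is a $w$-Carleson sequence with constant a multiple of $\|w\|_\alpha$. The weighted dyadic Carleson embedding theorem (obtained from the $L^2(w)$-boundedness of the weighted dyadic maximal function via a standard good-$\lambda$ or stopping-time argument) then gives
$$\sum_Q\mu(Q)\bigl(\langle g\rangle_Q^w\bigr)^2\lesssim\|w\|_\alpha\int|g|^2 w\,dx,$$
which combined with the previous display produces \eqref{bilplan} with $C_w\lesssim\|w\|_\alpha^{1/2}$.

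\emph{Main obstacle.} The delicate point is the kernel discretization in Step 2: a single dyadic grid does not capture pairs $(x,y)$ lying on opposite sides of a large dyadic wall, so the natural bound $|x-y|^{\alpha-n}\lesssim\sum_{Q\ni x,y}\ell(Q)^{\alpha-n}$ must be applied simultaneously to a finite family of translated grids (equivalently, via an annular Whitney-type covering) in order to majorize the kernel uniformly. Once the Carleson sequence $\mu(Q)$ has been correctly identified, verification of the testing condition and the application of the weighted Carleson embedding are routine, and yield the sharp constant $C_w\sim\|w\|_\alpha^{1/2}$ asserted in the lemma.
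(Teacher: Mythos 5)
The paper does not prove this lemma at all: it is quoted verbatim from Kerman and Sawyer \cite{KS} (Theorem 2.3 there; see also Lemma 2.1 of \cite{BBRV}), so there is no in-paper argument to compare yours against. Judged on its own, your outline is a correct and essentially standard modern proof of the Kerman--Sawyer trace inequality. Step 1 is sound: the adjoint of $I_{\alpha/2}:L^2\to L^2(w)$ is $g\mapsto I_{\alpha/2}(wg)$, and the Riesz composition formula $I_{\alpha/2}\circ I_{\alpha/2}=c_nI_\alpha$ (legitimate by Tonelli for $g\ge0$ since $\alpha<n$) converts the squared norm into the bilinear form, from which necessity and the lower bound $C_w\gtrsim\|w\|_\alpha^{1/2}$ follow by testing with $g=\chi_Q$ exactly as you say. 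In Step 2 you correctly identify the two real issues: the kernel majorization $|x-y|^{\alpha-n}\lesssim\sum_{Q\ni x,y}\ell(Q)^{\alpha-n}$ requires a finite family of shifted dyadic grids (or a Whitney covering of the annuli), and the testing condition for the Carleson sequence $\mu(Q)=\ell(Q)^{\alpha-n}w(Q)^2$ follows by running the discretization backwards, i.e.\ writing $w(Q)^2=\int_Q\int_Q w(x)w(y)\,dx\,dy$, interchanging sum and integral, and using that $\sum_{Q\ni x,y,\,Q\subseteq R}\ell(Q)^{\alpha-n}$ is a geometric sum dominated by its smallest term, hence $\lesssim|x-y|^{\alpha-n}$; the weighted dyadic Carleson embedding then closes the argument with $C_w\lesssim\|w\|_\alpha^{1/2}$.

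One small gap worth closing: the hypothesis \eqref{ks2} is a supremum over \emph{standard} dyadic cubes only, whereas your Carleson/testing condition must be verified for cubes $R$ belonging to each of the shifted grids. A shifted dyadic cube $R$ is covered by boundedly many standard dyadic cubes $R_i$ of the same side length, but the cross terms $\int_{R_i}\int_{R_j}\frac{w(x)w(y)}{|x-y|^{n-\alpha}}\,dx\,dy$ with $i\ne j$ are not directly controlled by \eqref{ks2}; they are handled by noting that the kernel is positive semidefinite (being $c_n^{-1}\langle I_{\alpha/2}(w\chi_{R_i}),I_{\alpha/2}(w\chi_{R_j})\rangle$), so Cauchy--Schwarz bounds each cross term by $\|w\|_\alpha\bigl(w(R_i)+w(R_j)\bigr)/2$. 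With that observation inserted, your argument is complete and recovers the sharp dependence $C_w\sim\|w\|_\alpha^{1/2}$ asserted in the lemma.
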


To prove Theorem \ref{thm} in the next section, we will use the above lemma with $\alpha=2$.
(Recall that the condition \eqref{ks} corresponds to the case $\alpha=2$ in \eqref{ks2}.)
Also, the following simple lemma is needed for handling the energy constant $E$.

\begin{lem}\label{lem2}
Let $\chi_{B(x_0,r)}$ be the characteristic function of the ball $B(x_0,r)\subset\mathbb{R}^n$.
Then there exists $r_0>0$ such that
$w=\chi_{B(x_0,r)}$ satisfies \eqref{ks2} uniformly for all $r<r_0$.
\end{lem}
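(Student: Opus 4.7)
The plan is to compute $\|\chi_{B(x_0,r)}\|_2$ directly from the definition \eqref{ks2} with $\alpha=2$ and show it is $O(r^2)$, which gives the desired uniform bound for $r<r_0$ with any $r_0>0$ (say $r_0=1$).

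Fix a dyadic cube $Q$ with side length $\ell$, and set $A:=Q\cap B(x_0,r)$. Only cubes with $|A|>0$ contribute to the supremum. Since $A\subset Q$ and $A\subset B(x_0,r)$, the diameter of $A$ satisfies $\operatorname{diam}(A)\le\min(\sqrt{n}\,\ell,2r)$. The first key step is the pointwise bound, for any $x\in A$,
\begin{equation*}
\int_A\frac{dy}{|x-y|^{n-2}}\le\int_{B(x,\operatorname{diam}(A))}\frac{dy}{|x-y|^{n-2}}=C\operatorname{diam}(A)^2\le C\min(\ell,r)^2,
\end{equation*}
which follows by switching to polar coordinates centered at $x$. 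Integrating this in $x$ over $A$ yields
\begin{equation*}
\int_A\int_A\frac{dx\,dy}{|x-y|^{n-2}}\le C\min(\ell,r)^2\,|A|.
\end{equation*}

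Since $\int_Q w=|A|$, dividing by $|A|$ gives
\begin{equation*}
\bigg(\int_Q w\bigg)^{-1}\int_Q\int_Q\frac{w(x)w(y)}{|x-y|^{n-2}}dx\,dy\le C\min(\ell,r)^2\le Cr^2,
\end{equation*}
and taking the supremum over dyadic cubes $Q$ yields $\|w\|_2\le Cr^2$ with $C$ depending only on $n$. Choosing any $r_0>0$ (for instance $r_0=1$), we get $\|w\|_2\le Cr_0^2$ uniformly for all $r<r_0$, which is the claim.

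I do not expect a serious obstacle: the whole argument reduces to the elementary fact that $|x-y|^{-(n-2)}$ is locally integrable in dimension $n\ge 3$, so the Riesz-type integral over a set of diameter $\delta$ is controlled by $\delta^2$. The only mild point to keep track of is the two regimes $\ell\le r$ and $\ell>r$, both handled uniformly by taking the minimum in $\operatorname{diam}(A)$.
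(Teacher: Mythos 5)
Your proof is correct and follows essentially the same route as the paper's: both reduce the Kerman--Sawyer quantity to the uniform smallness of the Riesz potential of the ball, $\int\frac{w(y)}{|x-y|^{n-2}}dy\leq Cr^2$, after which the outer integration cancels against $\int_Q w$. The only cosmetic difference is that you restrict $x$ to $Q\cap B(x_0,r)$ from the outset, which spares you the paper's two-case split between $|x-x_0|<2r$ and $|x-x_0|\geq2r$.
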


\begin{proof}
First, note that
\begin{equation*}
\sup_{Q}\bigg(\int_Qw(x)dx\bigg)^{-1}\int_Q\int_Q\frac{w(x)w(y)}{|x-y|^{n-\alpha}}dxdy\leq
\sup_{x\in\mathbb{R}^n}\int_{\mathbb{R}^n}\frac{w(y)}{|x-y|^{n-\alpha}}dy.
\end{equation*}
Hence, it suffices to show that
$$\lim_{r\rightarrow0}\sup_{x\in\mathbb{R}^n}\int_{|y-x_0|<r}\frac{1}{|x-y|^{n-\alpha}}dy=0.$$
But, this is an easy consequence of the following computation:
\begin{align*}
\sup_{x\in\mathbb{R}^n}\int_{|y-x_0|<r}\frac{1}{|x-y|^{n-\alpha}}dy
&\leq\sup_{|x-x_0|<2r}\int_{|y-x_0|<r}|x-y|^{-(n-\alpha)}dy\\
&\qquad\qquad\qquad+\sup_{|x-x_0|\geq2r}\int_{|y-x_0|<r}r^{-(n-\alpha)}dy\\
&\leq\sup_{x\in\mathbb{R}^n}\int_{|x-y|<4r}|x-y|^{-(n-\alpha)}dy+Cr^\alpha\\
&\leq Cr^\alpha.
\end{align*}
\end{proof}

Finally, we recall the following lemma (see, for example, \cite{G})
concerning the doubling property.

\begin{lem}\label{lem3}
Let $f\in L_{\textrm{loc}}^1$ be a function in a ball $B(x_0,r_0)\subset\mathbb{R}^n$.
Assume that the doubling property
\begin{equation}\label{doub}
\int_{B(x_0,2r)}f\,dx\leq C\int_{B(x_0,r)}f\,dx
\end{equation}
holds for all $r$ with $2r<r_0$.
If $f\geq0$ and $f$ has a zero of infinite order at $x_0$,
then $f$ must vanish identically in $B(x_0,r_0)$.
\end{lem}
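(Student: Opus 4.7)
The plan is to iterate the doubling inequality \eqref{doub} and pit the resulting polynomial growth in the number of iterations against the super-polynomial decay supplied by the infinite-order zero \eqref{order}. Since no PDE structure is invoked, the whole argument reduces to a short book-keeping exercise in matched rates.

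Concretely, I would proceed as follows. Iterating \eqref{doub} $k$ times yields
$$\int_{B(x_0,2^k r)}f\,dx\leq C^k\int_{B(x_0,r)}f\,dx\qquad\text{whenever } 2^k r<r_0.$$
Fix $\rho\in(0,r_0)$ and, for small $r>0$, choose $k=k(r):=\lceil\log_2(\rho/r)\rceil$, so that $\rho\leq 2^k r<2\rho$. Since $f\geq 0$, monotonicity of $r\mapsto\int_{B(x_0,r)}f\,dx$ combined with the iterated estimate gives
$$\int_{B(x_0,\rho)}f\,dx\leq C^k\int_{B(x_0,r)}f\,dx\leq 2^{\log_2 C}\Bigl(\frac{\rho}{r}\Bigr)^{\log_2 C}\int_{B(x_0,r)}f\,dx.$$
The hypothesis \eqref{order}, applied to $f$, supplies for every $m>0$ a constant $C_m$ with $\int_{B(x_0,r)}f\,dx\leq C_m r^m$ for all small $r$. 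Choosing any $m>\log_2 C$ and letting $r\to 0$ forces $\int_{B(x_0,\rho)}f\,dx=0$; since $f\geq 0$ and $\rho<r_0$ is arbitrary, $f$ vanishes almost everywhere on $B(x_0,r_0)$.

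The only subtlety is the matching of exponents: the iterated doubling permits at most polynomial growth $(\rho/r)^{\log_2 C}$ in the Lebesgue integral, whereas the infinite-order vanishing permits polynomial decay of arbitrary order, so any $m$ strictly larger than $\log_2 C$ closes the argument. There is no substantive technical obstacle; the only care point is to take $r$ small enough that the iteration chain $B(x_0,r)\subset B(x_0,2r)\subset\cdots\subset B(x_0,2^k r)$ remains inside $B(x_0,r_0)$ where \eqref{doub} is assumed to hold, and this is automatic once $\rho<r_0$.
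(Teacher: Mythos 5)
Your proof is correct, and it is worth noting that the paper itself offers no proof of this lemma at all --- it is simply quoted from \cite{G} --- so your argument supplies the missing details. The mechanism you use (iterate the doubling inequality to get at most polynomial growth $C^{k}\sim(\rho/r)^{\log_2 C}$ across dyadic scales, then beat it with the arbitrary-order decay $\int_{B(x_0,r)}f\,dx\leq C_m r^m$ from \eqref{order} by taking $m>\log_2 C$) is the standard one, and all the estimates check out: $f\geq0$ gives monotonicity of $r\mapsto\int_{B(x_0,r)}f\,dx$, the choice $k=\lceil\log_2(\rho/r)\rceil$ puts $\rho\leq 2^k r<2\rho$, and letting $r\to0$ kills $\int_{B(x_0,\rho)}f\,dx$. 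The one imprecision is your closing claim that keeping the iteration chain inside $B(x_0,r_0)$ is ``automatic once $\rho<r_0$'': since the last doubling step requires $2^k r<r_0$ and you only control $2^k r<2\rho$, the argument as written covers only $\rho\leq r_0/2$. This is harmless --- having shown $f=0$ a.e.\ on $B(x_0,r)$ for every $r<r_0/2$, one more application of \eqref{doub} gives $\int_{B(x_0,2r)}f\,dx\leq C\int_{B(x_0,r)}f\,dx=0$ for all such $r$, hence $f=0$ a.e.\ on all of $B(x_0,r_0)$ --- but it should be said.
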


%%%%%%%%%%%%%%%%%%%%%%%%%%%%%%%%%%%%%%%%%%%%%%%%%%%%%%%%%%%%%%%%%%%%%%%%%%%%%%%%%%%%%%%%%%%%%%%%%%%%%%%%%%%%%%%%%%%%
\section{Proof of Theorem \ref{thm}}
Suppose that $(b)$ does not hold.
Without loss of generality, we may then assume $\psi\geq0$ near $x_0$
(since the other case $\psi\leq0$ follows clearly from the same argument).
With this assumption, we will show that $\psi$ must vanish identically in a sufficiently small neighborhood of $x_0$,
and thereby we prove the theorem.
For simplicity of notation we shall also assume $x_0=0$,
and we will use the letter $C$ to denote positive constants possibly different at each occurrence.

Now, let $\eta$ be a smooth function supported in $B(0,2\delta)$ such that
$0\leq\eta\leq1$, $\eta=1$ on $B(0,\delta)$ and $|\nabla\eta|\leq C\delta^{-1}$.
Here, $\delta>0$ is less than a fixed $\delta_0/2$ chosen later.
Putting $\phi=\eta^2/(\psi+\varepsilon)$ with $\varepsilon>0$ in the integral in \eqref{weak},
we see that
$$\int\frac{2\eta}{\psi+\varepsilon}\nabla\psi\cdot\nabla\eta\,dx
-\int\frac{\eta^2}{(\psi+\varepsilon)^2}\nabla\psi\cdot\nabla\psi\,dx
+\int(V(x)-E)\frac{\psi\eta^2}{\psi+\varepsilon}\,dx=0.$$
(Here it is an elementary matter to check $\phi\in H_0^1$.)
By setting $\widetilde{\psi}=\ln(\psi+\varepsilon)$, it follows now that
\begin{equation}\label{equa}
\int2\eta\nabla\widetilde{\psi}\cdot\nabla\eta\,dx
-\int\eta^2\nabla\widetilde{\psi}\cdot\nabla\widetilde{\psi}\,dx
+\int(V(x)-E)\frac{\psi\eta^2}{\psi+\varepsilon}\,dx=0.
\end{equation}
Using the simple algebraic inequality
$$2ab\leq(a^2/4+4b^2),\quad a,b\geq0,$$
we bound the first integral in \eqref{equa} as follows:
\begin{align*}
\bigg|\int2\eta\nabla\widetilde{\psi}\cdot\nabla\eta\,dx\bigg|
&\leq\int2|\eta\nabla\widetilde{\psi}||\nabla\eta|\,dx\\
&\leq\int\frac14\eta^2|\nabla\widetilde{\psi}|^2dx
+\int4|\nabla\eta|^2\,dx.
\end{align*}
Then by combining this and \eqref{equa}, it is not difficult to see that
\begin{equation}\label{134}
\int\eta^2|\nabla\widetilde{\psi}|^2dx\leq\frac{16}3\int|\nabla\eta|^2dx
+\frac43\int|V(x)-E|\eta^2dx.
\end{equation}
Now, using Lemmas \ref{lem} and \ref{lem2} in the previous section,
the second term in the right-hand side of \eqref{134} is bounded as follows:
\begin{align*}
\int|V(x)-E|\eta^2dx&\leq\int|V|\eta^2dx+|E|\int\chi_{B(0,2\delta)}\eta^2dx\\
&\leq C\|V\|\int|\nabla\eta|^2dx+C|E|\|\chi_{B(0,2\delta)}\|\int|\nabla\eta|^2dx\\
&\leq C(\|V\|+1)\int|\nabla\eta|^2dx
\end{align*}
if $2\delta<\delta_0$ for a sufficiently small $\delta_0$.
Indeed, note that when $\alpha=2$ in Lemma \ref{lem}, the inequality \eqref{ee} is equivalent to
$$\int|g|^2wdx\leq C\|w\|\int|\nabla g|^2dx,\quad g\in H^1.$$
Then this and Lemma \ref{lem2} give the above bound.
Consequently, returning to \eqref{134} and recalling $\eta=1$ on $B(0,\delta)$,
we get
\begin{align}\label{578}
\nonumber\int_{B(0,\delta)}|\nabla\widetilde{\psi}|^2dx\leq\int\eta^2|\nabla\widetilde{\psi}|^2dx&\leq C\int|\nabla\eta|^2dx\\
\nonumber&\leq C\int_{B(0,2\delta)}\delta^{-2}dx\\
&\leq C\delta^{n-2}.
\end{align}

At this point, one can apply the Poincar\'{e} inequality (\cite{GT}) and the lemma of John and Nirenberg \cite{JN},
as in \cite{CG}, in order to conclude that
for some $\rho>0$
\begin{equation}\label{13}
\bigg(\frac1{|B(0,\delta)|}\int_{B(0,\delta)}e^{\rho\widetilde{\psi}}dx\bigg)
\bigg(\frac1{|B(0,\delta)|}\int_{B(0,\delta)}e^{-\rho\widetilde{\psi}}dx\bigg)<C.
\end{equation}
In fact, by the Poincar\'{e} inequality and \eqref{578},
$$\int_{B(0,\delta)}|\widetilde{\psi}-\widetilde{\psi}_B|^2dx
\leq C\delta^2\int_{B(0,\delta)}|\nabla\widetilde{\psi}|^2dx
\leq C\delta^{n},$$
where
$$\widetilde{\psi}_B=\frac1{|B(0,\delta)|}\int_{B(0,\delta)}\widetilde{\psi}\,dx.$$
Now, by H\"older's inequality
$$\int_{B(0,\delta)}|\widetilde{\psi}-\widetilde{\psi}_B|dx
\leq C\delta^{n},$$
and so $\widetilde{\psi}$ belongs to the BMO space (in $B(0,\delta_0)$).
Thus, by the lemma\footnote{\,See also Theorem 3.5 in \cite{HL} and Proposition 6.1 in \cite{T}.} of John and Nirenberg \cite{JN},
there exists some $\rho>0$ so that
$$\int_{B(0,\delta)}e^{\rho|\widetilde{\psi}-\widetilde{\psi}_B|}dx\leq C\delta^n.$$
This implies that
\begin{align*}
\int_{B(0,\delta)}e^{\rho(\widetilde{\psi}-\widetilde{\psi}_B)}dx\int_{B(0,\delta)}e^{-\rho(\widetilde{\psi}-\widetilde{\psi}_B)}dx
&=\int_{B(0,\delta)}e^{\rho\widetilde{\psi}}dx\int_{B(0,\delta)}e^{-\rho\widetilde{\psi}}dx\\
&\leq C\delta^{2n}
\end{align*}
which is \eqref{13}.
Since $\widetilde{\psi}=\ln(\psi+\varepsilon)$, by Fatou's lemma,
\eqref{13} leads to
$$\bigg(\frac1{|B(0,\delta)|}\int_{B(0,\delta)}\psi^\rho dx\bigg)
\bigg(\frac1{|B(0,\delta)|}\int_{B(0,\delta)}\psi^{-\rho}dx\bigg)<C.$$
It is a well known fact\footnote{\,The doubling property is satisfied for functions in the $A_2$ Muckenhoupt class.} (see \cite{St}, Chap. V, Section 1.5) that this implies the doubling property \eqref{doub} (with $x_0=0$) for $\psi^\rho$.
Then, by Lemma \ref{lem3}, $\psi^\rho$ must vanish identically near $x_0=0$,
and so $\psi\equiv0$ near $x_0=0$.

% --------------------------------------------------------------------------------------------------------------


\begin{thebibliography}{9}

\bibitem{BBRV} J. A. Barcelo, J. M. Bennett, A. Ruiz and M. C. Vilela,
\textit{Local smoothing for Kato potentials in three dimensions}, Math. Nachr. 282 (2009), 1391-1405.

\bibitem{CG} F. Chiarenza and N. Garofalo, \textit{Unique continuation for nonnegative solutions of
Schr\"odinger operators}, Preprint Series No. 122, University of Minnesota, 1984.

%\bibitem{CF} R. R. Coifman and C. Fefferman, \textit{Weighted norm inequalities for maximal functions
%and singular integrals}, Studia Math. 51 (1974), 241-250.

\bibitem{G} M. Giaquinta, \textit{Multiple integrals in the calculus of variations and nonlinear elliptic systems},
Annals of Math. Studies, no. 105, Princeton Univ. Press (1983).

\bibitem{GT} D. Gilbarg and N. Trudinger, \textit{Elliptic Partial Differential Equations of Second Order},
2nd ed., Springer-Verlag, Berlin, 1983.

\bibitem{HL} Q. Han and F. Lin, \textit{Elliptic Partial Differential Equations},
Courant Lecture Notes in Mathematics, 1. New York University, Courant Institute of Mathematical Sciences, New York;
American Mathematical Society, Providence, RI, 1997.

\bibitem{H-OH-O2S} M. Hoffman-Ostenhof, T. Hoffman-Ostenhof and B. Simon,
\textit{Brownian motion and a consequence of Harnack's inequality: nodes of quantum wave functions},
Proc. Amer. Math. Soc. 80 (1980), 301-305.

\bibitem{JN} F. John and L. Nirenberg, \textit{On functions of bounded mean oscillation},
Comm. Pure Appl. Math. 14 (1961), 415-426.

\bibitem{KS} R. Kerman and E. Sawyer, \textit{The trace inequality and eigenvalue estimates for
Schr\"odinger operators}, Ann. Inst. Fourier (Grenoble) 36 (1986), 207-228.

\bibitem{S} I. Seo, \textit{On minimal support properties of solutions of Schr\"odinger equations},
J. Math. Anal. Appl. 414 (2014), 21-28.

\bibitem{St} E. M. Stein, \textit{Harmonic Analysis. Real-variable Methods, Orthogonality, and Oscillatory Integrals}, Princeton University Press, Princeton, New Jersey, 1993.

\bibitem{T} A. Torchinsky, \textit{Real-variable methods in harmonic analysis}, Pure and Applied Mathematics, 123. Academic Press, Inc., Orlando, FL, 1986.



\end{thebibliography}
\end{document}